\newtheorem{theorem}{Theorem}[section]
\newtheorem{lemma}[theorem]{Lemma}
\newtheorem{corollary}[theorem]{Corollary}
\newtheorem{proposition}[theorem]{Proposition}
\newproof{proof}{Proof}
\newcommand{\cN}{{\mathcal N}}
\newcommand{\cP}{{\mathcal P}}
\newcommand{\cR}{{\mathcal R}}
\newcommand{\cU}{{\mathcal U}} 
\journal{}
\begin{document}

\begin{frontmatter}

\title{Characterising rooted and unrooted tree-child networks}

\author[label1]{Janosch D\"ocker}
\ead{janosch.doecker@gmail.com}

\author[label1]{Simone Linz}
\ead{s.linz@auckland.ac.nz}

\address[label1]{School of Computer Science, University of Auckland, New Zealand}

\begin{abstract}
Rooted phylogenetic networks are used by  biologists to infer and represent complex evolutionary relationships between species that cannot be accurately explained by a phylogenetic tree.  Tree-child networks are a particular class of rooted phylogenetic networks that has been extensively investigated in recent years. In this paper, we give a novel characterisation of a tree-child network $\cR$  in terms of cherry-picking sequences that are sequences on the leaves of $\cR$ and reduce it to a single vertex by repeatedly applying one of two reductions to its leaves. We show that our characterisation  extends to unrooted tree-child networks which are mostly unexplored in the literature and, in turn, also offers a new approach to settling the computational complexity of deciding if an unrooted phylogenetic network can be oriented as a rooted tree-child network.
\end{abstract}

\begin{keyword}
cherry-picking sequence\sep cherry-reduction sequence \sep orientability \sep phylogenetic network\sep tree-child network
\end{keyword}

\end{frontmatter}

\section{Introduction}

Rooted phylogenetic networks play an increasingly important role in studying the evolution of biological entities (referred to as taxa) such as species, populations, and viruses whose evolutionary histories cannot be accurately represented by a phylogenetic tree~\cite{hibbins22}. From a graph-theoretic perspective, rooted phylogenetic networks (resp. trees) are leaf-labelled directed acyclic graphs (resp. leaf-labelled directed trees) with a single source vertex that is called the root. Typically, the leaves of rooted phylogenetic trees and networks correspond to taxa for which sequence data is available while all other vertices represent hypothetical ancestral taxa that are inferred. In a rooted phylogenetic tree, each non-leaf vertex can also be viewed as a speciation event, whereas in a rooted phylogenetic network each non-leaf vertex with in-degree one can be viewed as a speciation event and each non-leaf vertex with in-degree at least two can be viewed as a reticulation event. Reticulation events cause patterns of relationships that cannot be explained by a single phylogenetic tree and include processes like hybridisation, horizontal gene transfer, and recombination. They all have in common that genetic material is not exclusively transferred vertically but also horizontally between taxa.  Since the reconstruction and analysis of rooted phylogenetic networks is much more complex in comparison to rooted phylogenetic trees, many different classes of phylogenetic networks have been investigated in recent years (for a recent review, see~\cite{kong22}). While some of these classes are biologically motivated, most classes have been introduced because they have favourable algorithmic or mathematical properties. 

A particularly popular class of rooted phylogenetic networks is the class of tree-child networks. Roughly speaking (formal definitions are deferred to the next section), a rooted phylogenetic network is tree-child if no two vertices of in-degree at least two are joined by an edge or have a common parent. Tree-child networks were introduced in 2009~\cite{cardona09} and continue to be extensively investigated in contexts as diverse as counting and generating all tree-child networks, analysing collections of rooted phylogenetic trees that are simultaneously embedded in a tree-child network, and reconstructing tree-child networks from distance matrices, smaller subnetworks, or phylogenetic trees~ (e.g.,~\cite{bordewich18, cardona24,fuchs21,linz19,murakami19,semple16}). The wide acceptance of tree-child networks is due to the fact that their structural properties give enough mathematical traction to establish provable results and algorithms without being overly restrictive. In addition, several problems such as {\sc Tree-Containment}~\cite{iersel10} and {\sc Softwired-Parsimony}~\cite{frohn} that are NP-hard to solve for the encompassing class of all rooted phylogenetic networks become polynomial-time solvable or polynomial-time approximable within a small constant factor for tree-child networks.

Although it is desirable to directly reconstruct a rooted phylogenetic network from data in a one-step process, this is not always possible in practice. For instance, there can be uncertainties about the position of the root or about which vertex of a cycle represents a reticulation event. More generally, it can be challenging to infer the exact order in which reticulation and speciation events have occurred. In these situations, it can nonetheless  be possible to reconstruct an unrooted phylogenetic network first before  placing a root and assigning directions to the edges in a subsequent step to obtain a rooted phylogenetic network. As the name suggests, an unrooted phylogenetic network is a leaf-labelled undirected graph. While the definitions of some classes of rooted phylogenetic networks naturally carry over to the unrooted setting (e.g., level-$k$ and tree-based networks), unrooted tree-child networks remain largely unexplored in the literature. Nevertheless, the following related question  was recently raised in~\cite{huber22} and shown to be fixed-parameter tractable in the same paper. \\

\noindent{\sc Tree-Child-Orientation}\\
\noindent Given an unrooted phylogenetic network $\cU$ whose non-leaf vertices all have degree three, does there exist a rooted tree-child network $\cR$ such that $\cU$ can be obtained from $\cR$ by suppressing its root and omitting all edge directions? \\

\noindent Indirectly, the question suggests that the class of unrooted tree-child networks contains precisely those unrooted phylogenetic networks that can be oriented as a rooted tree-child network. Unfortunately, the computational complexity of {\sc Tree-Child-Orientation} and, therefore, of deciding if an unrooted phylogenetic network is tree-child remains open, despite considerable effort that, to date, has resulted in necessary conditions for when $\cU$ can be oriented as a rooted tree-child network~\cite{maeda23}, a practical parameterised algorithm as well as a fast heuristic to solve {\sc Tree-Child-Orientation}~\cite{urata24+}, and a complexity result that establishes NP-completeness for a variant of {\sc Tree-Child-Orientation} in which each non-leaf vertex of $\cU$ has degree five~\cite{doecker24+}.

Adopting the definition of unrooted tree-child networks suggested in the last paragraph, this paper establishes novel characterisations for rooted and unrooted tree-child networks, thereby offering a new way of attack to settle the computational complexity of {\sc Tree-Child-Orientation}. More specifically, we show that rooted as well as unrooted tree-child networks can be characterised by a certain type of cherry-picking sequence, that reduces a phylogenetic network to a smaller one by applying  one of two operations to its leaves until no further operation is possible. Cherry-picking sequences were introduced in the context of picking cherries from  phylogenetic trees to simultaneously embed a collection $\cP$ of rooted phylogenetic trees in a so-called temporal tree-child network while minimising the number of reticulation events that are necessary to explain $\cP$~\cite{humphries13}. Since then, the results presented in~\cite{humphries13} have been generalised to larger classes of rooted phylogenetic networks and to deciding if a given rooted phylogenetic network is embedded in another such network  (e.g.,~\cite{bordewich,janssen21,linz19}). Most recently, cherry-picking sequences have also been used in the context of computing distances between phylogenetic networks~\cite{landry23} and to develop practical algorithms that reconstruct phylogenetic networks from a collection of phylogenetic trees in a machine-learning framework~\cite{bernardini24}.  

The remainder of the paper is organised as follows. The next section  provides mathematical definitions and concepts that are used throughout the following sections. We then present characterisations for rooted and unrooted tree-child networks in terms of cherry-picking sequences in Section~\ref{sec:characterisation}. Section~\ref{sec:conclu} finishes  the paper with concluding remarks on stack-free networks and a type of cherry-picking sequence that is related to the one used to establish the results of Section~\ref{sec:characterisation}.

We end this section by noting that the idea of settling the  complexity of {\sc Tree-Child-Orientation} with the help of cherry-picking sequences was first mentioned in~\cite[page 214]{murakami21}, but not further pursued since then. 

\section{Preliminaries}\label{sec:prelim}

In this section, we introduce notation and terminology that is used in the remainder of the paper.\\

\noindent {\bf Phylogenetic networks.} Let $X$ be a non-empty finite set. An \emph{unrooted binary phylogenetic network} $\cU$ on $X$ is a simple undirected graph whose internal vertices all have degree 3 and whose leaves are bijectively labelled with the elements in $X$.  A \emph{rooted binary phylogenetic network} $\cR$ on $X$ is a directed acyclic graph with no loop and no parallel arcs that satisfies the following properties:
\begin{enumerate}[(i)]
\item there is a unique vertex $\rho$, the \emph{root}, with in-degree 0 and out-degree $2$,
\item a vertex of out-degree $0$ has in-degree 1 and the set of vertices with out-degree~$0$ is $X$, and
\item each internal vertex has either in-degree 1 and out-degree 2, or in-degree 2 and out-degree 1.
\end{enumerate}

\noindent For technical reasons, if $|X|=1$, then the isolated vertex labelled with the element in $X$ is an unrooted binary phylogenetic network and a rooted binary phylogenetic network, in which case this vertex is also considered to be the root.  Since all  rooted and unrooted phylogenetic networks are binary throughout this paper, we refer to a rooted binary phylogenetic network as a {\it rooted phylogenetic network on $X$} and to an unrooted binary phylogenetic network as an {\it unrooted phylogenetic network on $X$}. Moreover, whenever we use the term {\it phylogenetic network $\cN$ on $X$} without specifying whether $\cN$ is unrooted or rooted, then the following statement or definition applies to unrooted and rooted phylogenetic networks. For example, for a leaf $a$ of a phylogenetic network, we denote  the unique vertex adjacent to $a$ by $p_a$.

Let $\cR$ be a rooted phylogenetic network on $X$, and let $u$ and $v$ be two vertices of $\cR$. We say that $u$ (resp. $v$) is a {\it parent} (resp. {\it child}) of $v$ (resp. $u$) if $(u,v)$ is an arc in $\cR$. Moreover, $u$ is called a {\it tree vertex} if it has in-degree 1 and out-degree 2 and a {\it reticulation} if it has in-degree 2 and out-degree 1. Similarly, if an arc is directed into a reticulation, we refer to it as a {\it reticulation arc}. We define the {\it reticulation number} $r(\cR)$ of $\cR$ to be equal to the number of reticulations in $\cR$, and the  {\it reticulation number} $r(\cU)$ of an unrooted phylogenetic network $\cU$ with edge set $E$ and vertex set $V$ to be equal to $|E|-(|V|-1)$. Lastly, $\cR$ (resp. $\cU$) is called a {\it rooted} (resp. {\it unrooted}) {\it phylogenetic $X$-tree} if $r(\cR)=0$ (resp. $r(\cU)=0$).\\

\noindent {\bf Cherries and reticulated cherries.} Let $\cU$ be an unrooted phylogenetic network on $X$, and let $a$ and $b$ be two distinct elements in $X$. We say that $[a,b]$ is a {\it cherry} in $\cU$ if $p_a=p_b$, or $\cU$ consists of the single edge $\{a,b\}$. Furthermore, we say that $(a,b)$ is a {\it reticulated cherry} in $\cU$ if there exist two vertices $u$ and $v$ in $\cN$ such that $\{a,u\}$, $\{u,v\}$, and $\{v,b\}$ are edges of $\cU$ and $\{u,v\}$ is an edge of a cycle, in which case we refer to $\{u,v\}$ as the {\it reticulation edge} of $(a,b)$. Let $[a,b]$ be a cherry of $\cU$, and let $(c,d)$ be a reticulated cherry of $\cU$. Then {\it reducing} $[a,b]$  is the operation of deleting $a$ and, if $\cU$ consists of at least two edges, suppressing the resulting degree-2 vertex. Similarly, {\it reducing} $(c,d)$ is the operation of deleting the reticulation edge of $(c,d)$ and suppressing the two resulting degree-2 vertices. Collectively, we refer to these two operations as {\it cherry reductions}. Observe that a cherry reduction always results in an unrooted  phylogenetic network. 
 
Turning to rooted networks, let $\cR$ be a rooted phylogenetic network on $X$ with root $\rho$, and let $a$ and $b$ be two distinct elements in $X$. We say that $[a,b]$ is a {\it cherry} in $\cR$ if $p_a=p_b$. Furthermore, we say that $(a,b)$ is a {\it reticulated cherry} with reticulation leaf $a$ in $\cR$ if $p_a$ is a reticulation and $(p_b,p_a)$ is an arc in $\cR$. Let $[a,b]$ be a cherry of $\cR$, and let $(c,d)$ be a reticulated cherry of $\cR$ with reticulation leaf $c$. Then {\it reducing} $[a,b]$ is the operation of deleting $a$ and suppressing the resulting degree-2 vertex if $p_a\ne \rho$, and deleting $a$ and $p_a$ if $p_a=\rho$. Similarly, {\it reducing} $(c,d)$ is the operation of deleting the reticulation arc $(p_d,p_c)$ and suppressing the two resulting degree-2 vertices. As for unrooted phylogenetic networks, we collectively refer to the  operations of reducing $[a,b]$ and $(c,d)$ as {\it cherry reductions}. By definition of a reticulated cherry $(c,d)$ with reticulation leaf $c$, $p_d$ is a tree vertex in $\cR$. Thus, applying a cherry reduction to $\cR$ always results in a rooted phylogenetic network. To illustrate, the rooted phylogenetic network $\cR_0$ that is shown in Figure~\ref{fig:tree-child-def} has reticulated cherries $(b,a)$ and $(b,c)$ each with reticulation leaf $b$, and the rooted phylogenetic network $\cR_1$ that is shown in the same figure has cherry $[b,c]$.\\

\noindent {\bf Remark.} A phylogenetic network $\cN$ on $X$ has cherry $[a,b]$ if and only if it has cherry $[b,a]$. However, if $[a,b]$ is a cherry of $\cN$, then the operation of  reducing $[a,b]$ in $\cN$ results in a phylogenetic network on $X\setminus \{a\}$, whereas the operation of reducing $[b,a]$ results in a phylogenetic network on $X\setminus \{b\}$. On the other hand, an unrooted phylogenetic network $\cU$ has reticulated cherry $(a,b)$ if and only if it has reticulated cherry $(b,a)$ and the unrooted phylogenetic network obtained from $\cU$ by reducing $(a,b)$ is isomorphic to the one obtained from $\cU$ by reducing $(b,a)$. Lastly,  if $(a,b)$ is a reticulated cherry of a rooted phylogenetic network $\cR$, then the first coordinate of the ordered pair $(a,b)$ always refers to the reticulation leaf from now on.
Hence, for two elements $a,b\in X$, at most one of $(a,b)$ and $(b,a)$ is a reticulated cherry of $\cR$.  \\

\begin{figure}[t]
    \centering
\scalebox{0.95}{\input{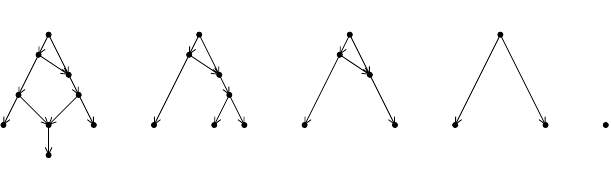_t}}
    \caption{A complete cherry-reduction sequence $\sigma$ for the rooted phylogenetic network $\cR_0$  whose associated cherry-picking sequence is $((b,a),[c,b],(b,a),[b,a])$.}
    \label{fig:tree-child-def}
\end{figure}

\noindent{\bf Cherry-reduction sequences.} Let $\cN$ be a phylogenetic network on $X$, and  let $\sigma=(\cN=\cN_0,\cN_1,\cN_2,\ldots,\cN_k)$ be a sequence of  rooted or  unrooted phylogenetic networks such that, for each $i\in\{1,2,\ldots,k\}$, $\cN_i$ can be obtained from $\cN_{i-1}$ by a cherry reduction. We refer to $\sigma$ as a {\it cherry-reduction sequence} for $\cN$ and to $k$ as the {\it length} of $\sigma$. If $\cN_k$ has no cherry or reticulated cherry, then $\sigma$ is {\it maximal}. Moreover, if $\cN_k$ consists of a single vertex, then we say that $\sigma$ is {\it complete}. If $\sigma$ is complete, note that $k=|X|+r(\cN)-1$. In what follows, it is convenient to know which reductions have been applied in obtaining $\cN_k$ from $\cN_0$. We therefore associate a sequence $\Sigma=(r_1,r_2,\ldots,r_k)$ with $\sigma$ such that, for each $i\in\{1,2,\ldots,k\}$, $r_i=[x_i,y_i]$ if $\cN_i$ has been obtained from $\cN_{i-1}$ by reducing cherry $[x_i,y_i]$ and $r_i=(x_i,y_i)$ if $\cN_i$ has been obtained from $\cN_{i-1}$ by reducing reticulated cherry $(x_i,y_i)$. We refer to $\Sigma$ as a {\it cherry-picking sequence associated with $\sigma$} or, simply, as a {\it cherry-picking sequence} for $\cN$ if $\sigma$ is clear from the context or only plays an implicit role. Intuitively, each $r_i$ can be thought of as the reduction that has been applied to obtain $\cN_i$ from $\cN_{i-1}$. As for cherry-reduction sequences, we say that $\Sigma$ is {\it complete} if $k=|X|+r(\cN)-1$. For a cherry-reduction sequence $\sigma$ of a rooted phylogenetic network, the cherry-picking sequence associated with $\sigma$ is unique. On the other hand, for a complete cherry-reduction sequence $\sigma=(\cU=\cU_0,\cU_1,\cU_2,\ldots,\cU_k)$ of an unrooted phylogenetic network $\cU$, there are $2^{r(\cU)}$ cherry-picking sequences associated with $\sigma$ because, if $\cU_i$ is obtained from $\cU_{i-1}$ by reducing a reticulated cherry with leaves $a$ and $b$ for some $i\in\{1,2,\ldots,k\}$, then $r_i\in\{(a,b),(b,a)\}$. Figure~\ref{fig:tree-child-def} shows a complete cherry-reduction sequence $\sigma=(\cR_0,\cR_1,\cR_2,\cR_3,\cR_4)$ whose associated cherry-picking sequence is $((b,a),[c,b],(b,a),[b,a])$.  \\

\noindent{\bf Orchard and  tree-child networks.} We next define the classes of orchard~\cite{erdos19,janssen21,murakami21} and tree-child~\cite{cardona09} networks. Let $\cN$ be a  phylogenetic network on $X$. If $\cN$ has a complete cherry-reduction sequence, then $\cN$ is called an {\it orchard network}. As pointed out by Murakami~\cite[Figure 8.1]{murakami21}, not every maximal cherry-reduction sequence of an unrooted orchard network is  complete. This is in contrast to rooted orchard networks for which we have the following result~\cite[Proposition 4.1]{erdos19}.

\begin{proposition}\label{p:max}
Let $\cR$ be a rooted orchard network, and let $\sigma=(\cR=\cR_0,\cR_1,\cR_2,\ldots,\cR_k)$ be a maximal cherry-reduction sequence for $\cR$. Then $\sigma$ is complete.
\end{proposition}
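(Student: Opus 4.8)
The plan is to recognise that cherry reductions form a terminating abstract rewriting system and then invoke Newman's Lemma. Termination is immediate: reducing a cherry $[a,b]$ deletes the leaf $a$ and, by a short degree count, leaves the reticulation number unchanged, whereas reducing a reticulated cherry $(c,d)$ deletes one reticulation arc and leaves the leaf set unchanged; in either case $|X(\cN)|+r(\cN)$ drops by exactly one. Hence every cherry-reduction sequence for $\cR$ is finite, so $\sigma$ indeed ends at a network $\cR_k$ possessing neither a cherry nor a reticulated cherry (call such a network \emph{reduced}). It remains to show that a reduced network obtained from an orchard network must be a single vertex.

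To that end I would prove a local-confluence (``diamond'') lemma: if a rooted phylogenetic network $\cN$ admits two distinct cherry reductions $r_1$ and $r_2$, then $\cN/r_1$ and $\cN/r_2$ have a common cherry-reduct obtained from each by at most one further cherry reduction, where networks are compared up to relabelling of leaves. Together with termination, Newman's Lemma then gives that the cherry-reduction relation on rooted phylogenetic networks is confluent, so the reduced network reachable from any given rooted phylogenetic network is unique up to relabelling of its leaves.

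The proposition follows quickly from this. Since $\cR$ is an orchard network it has a complete cherry-reduction sequence, whose final network is a single vertex and hence a reduced network reachable from $\cR$; by confluence this is, up to relabelling, the \emph{only} reduced network reachable from $\cR$. In particular the reduced network $\cR_k$ at the end of the maximal sequence $\sigma$ is a single vertex, so that $\sigma$ is complete (and, as noted in the definition, has length $|X|+r(\cR)-1$).

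The real work — and the only genuine obstacle — is the case analysis behind the diamond lemma. One goes through the ways the ``gadgets'' of $r_1$ and $r_2$ (the common parent of a cherry, or the two leaves, their parents and the reticulation arc of a reticulated cherry) can interact: if the gadgets are vertex-disjoint the two reductions commute literally; if $r_1$ and $r_2$ reduce the same cherry in opposite orientations, then $\cN/r_1$ and $\cN/r_2$ already coincide after swapping two leaf labels (this is where the ``up to relabelling'' clause is needed); and the remaining cases are those where the gadgets share a vertex or an arc, so that performing one reduction can destroy the other's gadget while creating a new cherry. A representative instance: if $(a,b)$ and $(a,c)$ are reticulated cherries sharing the reticulation leaf $a$, then reducing $(a,b)$ turns $\{a,c\}$ into an ordinary cherry, and reducing that new cherry by deleting $a$ yields exactly the network obtained symmetrically from $\cN/(a,c)$. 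Checking all such overlap configurations, and verifying each time that one extra reduction on each branch suffices to reconverge, is routine but somewhat lengthy.
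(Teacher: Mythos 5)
Your proposal is correct, but it takes a different route from the one the paper relies on: the paper does not prove Proposition~\ref{p:max} at all, it imports it from Erd\H{o}s, Semple and Steel~\cite{erdos19}, whose argument shows directly that applying an arbitrary cherry reduction to a rooted orchard network again yields an orchard network, by surgically modifying a given complete cherry-picking sequence (an exchange/reordering argument), and then inducts on $|X|+r(\cR)$. You instead package the reductions as a terminating abstract rewriting system (the measure $|X|+r$ drops by exactly one per step, which is right) and invoke Newman's Lemma, so that the single-vertex normal form reachable from an orchard network is the unique normal form up to isomorphism. This is a genuinely different framing: the local analysis you must do for the diamond lemma is essentially the same combinatorial content as the sequence surgery in~\cite{erdos19}, but Newman's Lemma buys you a cleaner and in fact stronger conclusion (uniqueness of the reduced network up to relabelling, not merely completeness of maximal sequences), at the cost of having to be a little careful that the rewriting relation and termination measure descend to isomorphism classes -- which they do, and which suffices here because ``being a single vertex'' is isomorphism-invariant.

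Your sketch of the diamond lemma also identifies the right cases, and the case analysis is shorter than you suggest: a cherry and a reticulated cherry can never share a leaf (the parent of a cherry leaf is a tree vertex with two leaf children, whereas the leaves of a reticulated cherry have a reticulation parent or a parent with a reticulation child), two distinct reticulated cherries can only overlap by sharing their reticulation leaf, and in a rooted network $(a,b)$ and $(b,a)$ cannot both be reticulated cherries. So besides the literal commutation of reductions with disjoint gadgets (where one checks that the arcs created by suppression attach to leaves and hence cannot touch the other gadget), the only critical pairs are exactly the two you name: $[a,b]$ versus $[b,a]$, which converge immediately up to swapping the labels $a$ and $b$, and $(a,b)$ versus $(a,c)$, which converge after one further cherry reduction deleting $a$ on each side, as you verify. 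Writing these checks out in full would make the proof complete and self-contained.
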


\noindent While the last proposition implies that it can be decided in polynomial-time whether or not a rooted phylogenetic network is orchard, it was only shown most recently that it is NP-hard to decide if an unrooted phylogenetic network is orchard~\cite{dempsey}. 

A class of phylogenetic networks that is central to this paper, is the class of tree-child networks. We say that a  rooted phylogenetic network $\cR$ is {\it tree-child}, if every non-leaf vertex of $\cR$ has a child that is a tree vertex or a leaf. It is well known that a rooted tree-child network is also orchard, but not every rooted orchard network is also tree-child. Moreover, applying a cherry reduction to a rooted tree-child network always results in another rooted tree-child network~\cite[Lemma 4.1]{bordewich16}. In preparation for the next lemma, let $\cR$ be a rooted phylogenetic network. We say that $\cR$ contains a {\it stack} if there exist two reticulations that are joined by an arc and that $\cR$ contains a pair of {\it sibling reticulations} if there exist two reticulations that have a common parent. The following well-known equivalence now follows from the definition of a tree-child network and will be freely used throughout the remainder of the paper.

\begin{lemma}\label{l:equiv}
Let $\cR$ be a rooted phylogenetic network. Then $\cR$ is tree-child if and only if it has no stack and no sibling reticulations.
\end{lemma}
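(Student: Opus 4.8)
The plan is to prove the two implications separately, and in both directions the only structural fact I need is that in a binary rooted phylogenetic network every non-leaf vertex is either the root, a tree vertex, or a reticulation; consequently a reticulation has out-degree exactly one (so it has a single child), while the root and every tree vertex has out-degree exactly two (so each has exactly two children). I would record this observation at the outset so that the rest is a direct unwinding of the definitions.

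For the forward direction I would assume $\cR$ is tree-child and argue by contradiction. If $\cR$ contained a stack, there would be an arc $(u,v)$ with both $u$ and $v$ reticulations; then $v$ is the unique child of the non-leaf vertex $u$, and $v$ is neither a tree vertex nor a leaf, contradicting the tree-child property at $u$. If $\cR$ contained sibling reticulations $u$ and $v$ with common parent $w$, then $w$ is a non-leaf vertex with out-degree at least two, hence the root or a tree vertex, so $u$ and $v$ are its only two children; both being reticulations contradicts the tree-child property at $w$.

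For the converse I would assume $\cR$ has neither a stack nor sibling reticulations and check the tree-child condition at an arbitrary non-leaf vertex $w$. If $w$ is a reticulation, its unique child cannot be a reticulation, for otherwise that arc would be a stack; hence the child is a tree vertex or a leaf. If instead $w$ is the root or a tree vertex, it has exactly two children, and they cannot both be reticulations, for otherwise they would be sibling reticulations; hence at least one child is a tree vertex or a leaf. In either case $w$ has a child that is a tree vertex or a leaf, so $\cR$ is tree-child.

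I do not expect any genuine obstacle here: the equivalence is essentially a restatement of the definitions once the permissible out-degrees of a non-leaf vertex are made explicit. The only point warranting a moment's care is observing that a non-leaf vertex of out-degree at least two is forced to be the root or a tree vertex and therefore has out-degree exactly two, so that ``both children are reticulations'' is the only way the tree-child condition can fail at such a vertex — which is precisely the sibling-reticulation configuration.
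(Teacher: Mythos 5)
Your argument is correct: it is precisely the direct unwinding of the definitions that the paper itself invokes, since the paper states this lemma without proof, remarking only that it follows from the definition of a tree-child network. No issues.
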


\begin{figure}[t]
    \centering
\scalebox{0.95}{\input{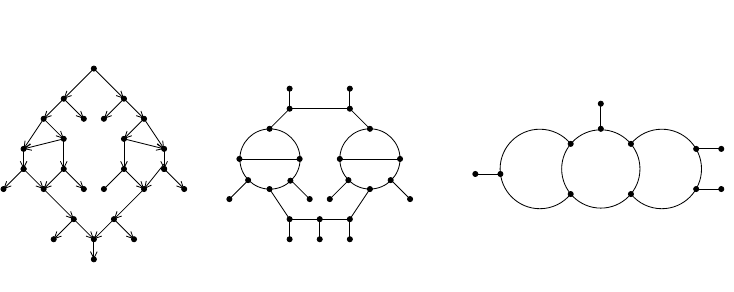_t}}
    \caption{(i) A rooted tree-child network $\cR$ and (ii) an unrooted tree-child network $\cU$ such that $\cR$ is a tree-child orientation of $\cU$. (iii) An unrooted tree-child network for which there exist two complete cherry-picking sequences $\Sigma= ((d,c),(b,c),[c,d],(a,d),[a,d],[b,d])$ and $\Sigma'=((c,d),(b,c),[c,d],(a,d),[a,d],[b,d])$, but only $\Sigma$ is tree-child.}
    \label{fig:tree-child-order}
\end{figure}

Let $\cU$ be an unrooted  phylogenetic network on $X$, and let $\cR$ be a rooted  phylogenetic network on $X$ with root $\rho$. Following the terminology introduced in~\cite{huber22}, we say that $\cR$ is an {\it orientation} of $\cU$ if $\cU$ can be obtained from $\cR$ by replacing each arc with an (undirected) edge and suppressing $\rho$. It is worth noting that not every unrooted phylogenetic network has an orientation. For a characterisation of those unrooted phylogenetic networks that have an orientation, we refer the reader to~\cite[Lemma 4.13]{janssen18}. Now, $\cU$ is referred to as an unrooted {\it tree-child} network if it has an orientation that is a rooted tree-child network. If such an orientation exists, we call it a {\it tree-child orientation} of $\cU$. Figure~\ref{fig:tree-child-order}(i)--(ii) shows a rooted tree-child network $\cR$ and an unrooted tree-child network $\cU$ such that $\cR$ is a tree-child orientation of $\cU$.

\section{Cherry-picking sequences that characterise rooted and unrooted tree-child networks}\label{sec:characterisation}

In this section, we introduce a particular type of cherry-picking sequences that satisfy two additional properties. We then use these sequences to characterise rooted and unrooted tree-child networks in Sections~\ref{subsec:rooted} and~\ref{subsec:unrooted}, respectively. Let $\Sigma=(r_1,r_2,\ldots,r_k)$ be a cherry-picking sequence for a phylogenetic network. For each element $i\in\{1,2,\ldots,k\}$, recall that either $r_i=[x_i,y_i]$ or $r_i=(x_i,y_i)$. We say that $r_i$ {\it contains} $x_i$ and $y_i$. Let $j$ be the smallest element in $\{i+1,i+2,\ldots,k\}$ such that $r_j$ contains $x_i$. If such a $j$ exists, then we set $s(i)=j$ and $S(i)=r_{s(i)}$, in which case we call $S(i)$ the {\it successor pair} of $r_i$. If, on the other hand, $j$ does not exist, we set $s(i)=\infty$. With these definitions in hand, we say that $\Sigma$ is {\it tree-child} if the following two properties are satisfied. 
\begin{enumerate}[(P1)]
\item For each element $i\in\{1,2,\ldots,k\}$ such that $r_i=(x_i,y_i)$ and $s(i)=j$ for some $j\in\{i+1,i+2,\ldots,k\}$, we have $S(i)=[x_j,y_j]$.
\item For two distinct elements $i,j\in\{1,2,\ldots,k\}$ such that $r_i=(x_i,y_i)$ and $r_j=(x_j,y_j)$, and $s(i)\ne \infty$ and $s(j)\ne\infty$, we have $s(i)\ne s(j)$.
\end{enumerate}
\noindent Roughly speaking, (P1) requires that the successor pair of a reticulated cherry in $\Sigma$, if it exists, is a cherry, whereas (P2) requires that no element in $\Sigma$ is the successor pair of two distinct reticulated cherries. Referring back to Figure~\ref{fig:tree-child-def}, it is straightforward to check that the cherry-picking sequence that is associated with $\sigma$ is tree-child because $s(1)=2$ and $s(3)=4$.

\subsection{Rooted tree-child networks}\label{subsec:rooted}

We next characterise rooted tree-child networks in terms of tree-child cherry-picking sequences. To this end, we establish two lemmas.

\begin{lemma}\label{l:one}
Let $\cR$ be a rooted phylogenetic network on $X$. If there exists a tree-child cherry-picking sequence associated with a complete cherry-reduction sequence for $\cR$, then $\cR$ is tree-child.
\end{lemma}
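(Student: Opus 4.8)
The plan is to prove the contrapositive: assuming $\cR$ is \emph{not} tree-child, I will show that no complete cherry-reduction sequence for $\cR$ can have a tree-child cherry-picking sequence associated with it. By Lemma~\ref{l:equiv}, if $\cR$ is not tree-child then $\cR$ contains either a stack or a pair of sibling reticulations. Rather than attack a fixed $\cR$ directly, I would instead argue along a complete cherry-reduction sequence $\sigma=(\cR=\cR_0,\cR_1,\ldots,\cR_k)$, using the fact (cited from~\cite{bordewich16}) that a cherry reduction applied to a tree-child network yields a tree-child network. Contrapositively, the \emph{first} network $\cR_{t}$ in $\sigma$ that is tree-child has a predecessor $\cR_{t-1}$ that is not tree-child, and the reduction $r_t$ turning $\cR_{t-1}$ into $\cR_t$ must destroy the offending structure. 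So it suffices to understand which single reduction can turn a non-tree-child network into a tree-child one, and to read off from that reduction a violation of (P1) or (P2) in the associated cherry-picking sequence $\Sigma=(r_1,\ldots,r_k)$.

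The key step is the local analysis at $\cR_{t-1}$. Since reducing a cherry $[a,b]$ only deletes a leaf (and possibly suppresses a tree vertex) it cannot remove a stack or sibling reticulations; hence $r_t$ must be a reticulated-cherry reduction, say $r_t=(c,d)$ with reticulation leaf $c$, and the reticulation $p_c$ must be part of the stack or sibling-reticulation configuration in $\cR_{t-1}$. There are two cases. If $p_c$ sits on top of a stack, i.e. $p_c$ has a reticulation child $w$, then in $\cR_{t-1}$ the vertex $w$ is a reticulation whose parent $p_c$ becomes (after reducing $(c,d)$) a tree vertex or disappears; tracing the leaf below $w$, I expect to find that the reticulated cherry whose reticulation edge was incident with $w$ must be reduced \emph{earlier} in $\sigma$, and this forces that earlier reduction to share the successor index $s(\cdot)=t$ with $r_t$, contradicting (P2) — or the earlier reduction is itself a reticulated-cherry reduction whose successor pair is $r_t=(c,d)$, a reticulated cherry, contradicting (P1). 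If instead $p_c$ has a sibling reticulation $w$ (common parent $v$ with $p_c$), a symmetric argument applies: the leaf hanging below $w$ and the leaf $d$ are involved in two reticulated-cherry reductions that collide at step $t$, again violating (P1) or (P2). The bookkeeping here is the delicate part: I must track exactly which leaf labels appear in which $r_i$ as $\sigma$ proceeds, and verify that the "witness" reductions are genuinely reticulated-cherry reductions (not plain cherry reductions) so that (P1)/(P2) apply to them.

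I expect the main obstacle to be making the case analysis at $\cR_{t-1}$ airtight — in particular, confirming that after the earlier reductions that reshaped the network, the stack or sibling-reticulation configuration is still present at step $t-1$ in exactly the form needed, and that the second reticulation $w$ of the bad configuration is still a reticulation at that point (it could have been "resolved" earlier, in which case I need to push the argument back to whichever step first resolved it). A clean way to handle this is to choose $t$ minimal such that $\cR_t$ is tree-child but then, within $\cR_{t-1}$, further choose the bad configuration so that the companion reticulation $w$ persists as long as possible; alternatively one can induct on $r(\cR)$, peeling off the first reduction and applying the inductive hypothesis to $\cR_1$ when $\cR_1$ is still not tree-child, handling only the base-like case where $\cR$ is not tree-child but $\cR_1$ is. Either way, once the local picture is fixed, extracting the index collision (for (P2)) or the reticulated successor pair (for (P1)) should be a short computation with the definitions of $s(\cdot)$ and $S(\cdot)$.
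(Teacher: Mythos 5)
Your overall strategy is sound and, once unwound, is essentially the paper's own argument in contrapositive form: the paper inducts on the length of the sequence and is thereby reduced to exactly your local situation (a non-tree-child network turned into a tree-child one by a single reduction), from which it extracts a violation of (P1) or (P2). Your ``first tree-child network $\cR_t$ in $\sigma$'' device is an equivalent packaging, and your preliminary observations are correct: a cherry reduction cannot destroy a stack or a pair of sibling reticulations, so $r_t=(c,d)$ must be a reticulated-cherry reduction, and every offending configuration of $\cR_{t-1}$ must involve $p_c$. Your worry about the bad configuration ``persisting'' from $\cR_0$ is moot: you only need to analyse whatever stack or sibling pair $\cR_{t-1}$ itself contains, which is nonempty by the choice of $t$.

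The gap is in the case analysis, which as written would fail. Your stack case is set up on an impossible configuration: $p_c$ cannot have a reticulation child $w$, because its unique child is the leaf $c$. The actual stack case is that the parent $w$ of $p_c$ other than $p_d$ is a reticulation; after reducing $(c,d)$, $w$ is a reticulation whose child is the leaf $c$, and since $\sigma$ is complete, $w$ must later be destroyed, which can only happen through a reduction containing $c$; hence $S(t)$ exists and is a reticulated cherry with reticulation leaf $c$ --- a direct violation of (P1) at index $t$, not the ``earlier reduction''/(P2) collision you predict there. In the sibling case (common parent $w$ of $p_c$ and a reticulation $v'$), you must split on which reticulation arc into $v'$ is deleted first after step $t$: if it is $(w,v')$, then $S(t)$ is the reticulated cherry $(a,c)$, where $a$ is the leaf below $v'$ at that time, violating (P1); if it is the other arc, deleted at some step $j$ by a reticulated cherry $r_j$ with first coordinate $a$, then afterwards $a$ and $c$ hang as a cherry below $w$, so $s(t)=s(j)\neq\infty$, violating (P2). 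In both cases you also need the short argument that $s(t)\neq\infty$ (completeness forces the remaining reticulation to be destroyed, and the local picture around $w$ can only change through reductions containing $c$, respectively through the cherry on $a$ and $c$). These are precisely the steps you defer as ``bookkeeping'', and they are the substance of the proof; as stated, the stack case points at the wrong structure and the violations are attached to the wrong properties. A minor further point: induction on $r(\cR)$ would not work directly, since a cherry reduction does not decrease it; induct on the length $k$ of the sequence, as the paper does.
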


\begin{proof}
Suppose that there exists a tree-child cherry-picking sequence $\Sigma=(r_1,r_2,\ldots,r_k)$  associated with a complete cherry-reduction sequence $\sigma=(\cR=\cR_0,\cR_1,\cR_2,\ldots,\cR_k)$ for $\cR$.  We show by induction on $k$ that $\cR$ is tree-child. As $\sigma$ is complete, $\cR_k$ is a single vertex. Thus, if $k=1$, then $\Sigma=([x_1,y_1])$ and the result follows because $\cR$ is the rooted tree-child network whose root is the parent of $x_1$ and $y_1$. Now assume that the result holds for all rooted phylogenetic networks that have a complete tree-child cherry-picking sequence whose length is less than $k$. Since $\sigma$ is a complete cherry-reduction sequence for $\cR$, it follows that $(\cR_1,\cR_2,\ldots,\cR_k)$ is such a sequence for $\cR_1$ whose associated cherry-picking sequence $(r_2,\ldots,r_k)$ is tree-child. Applying the induction assumption implies that $\cR_1$ is tree-child. If $r_1=[x_1,y_1]$, then $\cR_0$ is obtained from $\cR_1$ by subdividing the arc directed into $y_1$ with a new vertex $u$ and adding the arc $(u,x_1)$. As $\cR_1$ is tree-child, it follows that $\cR_0$ is tree-child. On the other hand, if $r_1=(x_1,y_1)$, then $\cR_0$ is obtained from $\cR_1$ by subdividing the arc directed into $y_1$ with a new vertex $u$, subdividing the arc directed into $x_1$ with a new vertex $v$, and adding the arc $(u,v)$. Towards a contradiction, assume that $\cR_0$ is not tree-child. Then $\cR_0$ contains either a stack or a pair of sibling reticulations. Let $w$ be the parent of $v$ that is not $u$ in $\cR_0$.

Assume that $\cR_0$ contains a stack. Since $\cR_1$ is tree-child and $u$ is a tree vertex in $\cR_0$, it follows that  $w$ is a reticulation.  Let $r_i$ with $i\in\{2,3,\ldots,k\}$ be the successor pair of $r_1$ in $\Sigma$, that is, $S(1)=r_i$. As $x_1$ is the child of $w$ in $\cR_1$, we have $r_i=(x_1,y_i)$, thereby contradicting that $\Sigma$ satisfies (P1). Hence, we may assume that $\cR_0$ contains a pair of sibling reticulations. As $\cR_1$ is tree-child, it follows that $w$ is the parent of two reticulations in $\cR_0$. Let $v'$ be the child of $w$ that is not $v$. Let $i$ be the smallest element in $\{2,3,\ldots,k\}$ such that $\cR_i$ does not contain $v'$. Then $r_i=(x_i,y_i)$, and no element in $\{r_{2},r_{3},\ldots,r_{i-1}\}$ contains $x_1$.  If  the arc $(w,v')$ is deleted in obtaining $\cR_i$ from $\cR_{i-1}$, then $y_i= x_1$, again contradicting that $\Sigma$ satisfies (P1). Hence, in obtaining $\cR_i$ from $\cR_{i-1}$, the arc $(w',v')$ is deleted, where $w'$ is the parent of $v'$ that is not $w$. This implies that $\cR_i$ has cherry $[x_1,x_i]$. But then $s(1)=s(i)$, thereby contradicting that $\Sigma$ satisfies (P2). It now follows that $\cR$ is tree-child, thereby establishing the lemma.\qed
\end{proof}

\begin{lemma}\label{l:two}
Let $\cR$ be a rooted phylogenetic network on $X$. If $\cR$ is tree-child, then each cherry-picking sequence associated with a complete cherry-reduction sequence for $\cR$ is tree-child.
\end{lemma}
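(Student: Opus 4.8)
The plan is to induct on $k$, the length of a complete cherry-reduction sequence $\sigma=(\cR=\cR_0,\cR_1,\ldots,\cR_k)$, mirroring the structure of the proof of Lemma~\ref{l:one} but running the implication in the opposite direction. For the base case $k=1$ there is a single reduction which must be of the form $[x_1,y_1]$, and the associated length-one sequence is vacuously tree-child. For the inductive step, fix a complete cherry-reduction sequence $\sigma$ for a rooted tree-child network $\cR$ and let $\Sigma=(r_1,r_2,\ldots,r_k)$ be the (unique) associated cherry-picking sequence. Reducing $r_1$ produces $\cR_1$, which is again tree-child by~\cite[Lemma 4.1]{bordewich16}, and $(\cR_1,\ldots,\cR_k)$ is a complete cherry-reduction sequence for it of length $k-1$; by the induction hypothesis the suffix $(r_2,\ldots,r_k)$ is a tree-child cherry-picking sequence, i.e.\ it satisfies (P1) and (P2). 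It remains to show that prepending $r_1$ preserves (P1) and (P2).

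First I would dispose of the easy case $r_1=[x_1,y_1]$: here $r_1$ is a cherry, so it cannot be the successor pair of anything coming before it (nothing comes before), and it plays no role as a first coordinate of a reticulated cherry, so neither (P1) nor (P2) is affected — any potential violation of (P1) or (P2) involving index $1$ would require $r_1$ to be a reticulated cherry. The substantive case is $r_1=(x_1,y_1)$, a reticulated cherry of $\cR$ with reticulation leaf $x_1$, so $p_{x_1}$ is a reticulation in $\cR$ and $p_{y_1}$ is its tree-vertex parent (as noted in the Preliminaries). For (P1) with $i=1$: suppose $s(1)=j<\infty$, so $r_j$ is the first later reduction containing $x_1$; I must show $r_j=[x_1,y_j]$ is a cherry with $x_1$ as its deletable leaf. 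The point is that immediately after reducing $(x_1,y_1)$, the leaf $x_1$ in $\cR_1$ hangs below the \emph{tree vertex} that used to be the second parent $w$ of $p_{x_1}$ (more precisely, after suppression $x_1$ becomes adjacent to, i.e.\ a child of, $w$, which is a tree vertex because $\cR$ is tree-child and $w$ had a reticulation child). Between step $1$ and step $j$ no reduction touches $x_1$, and I would argue that $x_1$ therefore remains a leaf whose parent is a tree vertex throughout $\cR_1,\ldots,\cR_{j-1}$; hence at step $j$ the only reduction involving $x_1$ that is possible is a cherry reduction in which $x_1$ is a leaf of a cherry, giving $r_j=[x_1,y_j]$ (it cannot be a reticulated cherry with reticulation leaf $x_1$, since $p_{x_1}$ is not a reticulation, and by the final Remark in the Preliminaries the first coordinate of a reticulated cherry is always the reticulation leaf). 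This yields (P1).

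For (P2) I must rule out $s(1)=s(j)$ for some $j\ge 2$ with $r_j=(x_j,y_j)$ a reticulated cherry and $s(j)\neq\infty$. Note (P2) restricted to indices $\ge 2$ already holds by induction, so the only new case is $i=1$. Suppose $s(1)=s(j)=\ell$. By (P1) just established, $r_\ell=[x_1,y_\ell]$ and $r_\ell$ contains $x_1$; since $r_\ell$ is also the successor pair of $r_j$, $r_\ell$ contains $x_j$ as well, so $r_\ell=[x_1,y_\ell]$ with $\{x_1,y_\ell\}\ni x_j$, forcing $y_\ell=x_j$, i.e.\ $r_\ell=[x_1,x_j]$ and at step $\ell$ the leaves $x_1$ and $x_j$ form a cherry. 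I would derive a contradiction by tracing this cherry back: at step $\ell-1$ (just before the reduction), $p_{x_1}=p_{x_j}$ is a common parent, and since $x_1$'s parent has been a tree vertex ever since step $1$ and $x_j$'s parent has been a tree vertex ever since step $j$ (same argument as above applied to the reticulated cherry $(x_j,y_j)$), this common parent is a single tree vertex. Pulling the history back to $\cR$, this forces the two reticulations $p_{x_1}$ and $p_{x_j}$ of $\cR$ (which are distinct, since $x_1\neq x_j$ are distinct leaves below distinct reticulations — here one uses that $x_1$ is the reticulation leaf of $(x_1,y_1)$ and $x_j$ the reticulation leaf of $(x_j,y_j)$) to have been merged under one tree vertex, which means $\cR$ has a pair of sibling reticulations, contradicting Lemma~\ref{l:equiv}. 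Hence (P2) holds, completing the induction.

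The main obstacle I anticipate is the bookkeeping claim used in both (P1) and (P2): namely that once a leaf $a$ has a tree vertex as parent at some step of the reduction, and no subsequent reduction deletes $a$ or modifies its incident edge, then $a$'s parent remains a tree vertex until $a$ is next involved in a reduction. This needs a careful case check of how each of the two cherry reductions (cherry-reduction and reticulated-cherry-reduction, the latter including suppression of two degree-$2$ vertices) can alter the parent of a bystander leaf; the subtlety is that reducing a reticulated cherry elsewhere can delete a reticulation arc incident to $p_a$ and turn $p_a$ from a reticulation into a tree vertex (good direction) but, crucially, cannot turn a tree vertex $p_a$ into a reticulation. Establishing this monotonicity cleanly — perhaps by phrasing it as an invariant about the sequence $\cR_1,\ldots,\cR_k$ — is where the real work lies; once it is in place, (P1) and (P2) follow by the short arguments above.
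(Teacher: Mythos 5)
Your overall strategy coincides with the paper's (induction on $k$, peel off $r_1$, use the fact that cherry reductions preserve tree-childness), but there is a genuine gap at the crux of (P1). In the case $r_1=(x_1,y_1)$ you argue that, since the parent of $x_1$ is not a reticulation at step $j=s(1)$, the reduction $r_j$ must be a cherry reduction. Your parenthetical justification only excludes $x_1$ being the \emph{reticulation} leaf (first coordinate) of a reticulated cherry; it does not exclude $r_j$ being a reticulated cherry $(x_j,x_1)$ with $x_1$ as the second coordinate --- i.e.\ the situation where $p_{x_1}=w$ is a tree vertex whose other child is a reticulation with leaf child $x_j$. This configuration is perfectly compatible with ``$x_1$'s parent is a tree vertex'' and it would violate (P1). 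Ruling it out is exactly where the paper's proof does its work: it shows that in this case the child of $w$ other than $x_1$ is a reticulation in $\cR_{j-1}$, and that this pulls back to $w$ having two reticulation children in $\cR_0$, contradicting tree-childness via Lemma~\ref{l:equiv}. Your write-up contains no argument for this case.

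A related weakness affects your (P2) argument: the step ``pulling the history back to $\cR$'' is asserted rather than proved, and the objects are slightly off --- the reticulation above $x_j$ in $\cR_{j-1}$ need not be $p_{x_j}$ in $\cR$, since $x_j$ may occur as a second coordinate of earlier reductions and so its parent can change before step $j$. Moreover, the bookkeeping claim you single out as the ``main obstacle'' (that the parent of an untouched leaf remains the same non-reticulation vertex) is true and provable by the case analysis you sketch, but it is not sufficient. What both the missing (P1) case and your (P2) pullback actually require is an invariant about the \emph{other} child of $w$: as long as no reduction contains $x_1$, the child of $w$ other than $x_1$ is either still $w$'s original child from $\cR_0$ or has been replaced by a leaf; consequently, if that child is a reticulation at some later step, then $w$ already has two reticulation children in $\cR_0$. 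Without an argument of this kind (or the paper's alternative path-based argument for (P2)), the cherry $[x_1,x_j]$ you exhibit in $\cR_{\ell-1}$ does not yet yield a pair of sibling reticulations in $\cR$, so the contradiction with Lemma~\ref{l:equiv} is not established.
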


\begin{proof}
Suppose that $\cR$ is tree-child. Let $\Sigma=(r_1,r_2,\ldots,r_k)$ be the unique cherry-picking sequence associated with a complete cherry-reduction sequence $\sigma=(\cR=\cR_0,\cR_1,\cR_2,\ldots,\cR_k)$  for $\cR$. Since $\cR$ is orchard,  $\sigma$ and, consequently, $\Sigma$ exist. Furthermore, since $\cR$ is tree-child, it follows from~\cite[Lemma 4.1]{bordewich16} that each rooted phylogenetic network in $\sigma$ is tree-child.  We show by induction on $k$ that $\Sigma$ is tree-child. Since the upcoming argument can be applied to  each cherry-picking sequence associated with a complete cherry-reduction sequence for $\cR$, the lemma then follows. For the upcoming argument, recall that $k=|X|+r(\cR)-1$. If $k=1$, then, as $\cR_1$ is a single vertex, we have $\Sigma=([x_1,y_1])$. The result follows.  Now assume that the result holds for all rooted  tree-child networks whose complete cherry-reduction sequences have length at most $k-1$. Consider the rooted tree-child network $\cR_1$ that has either one leaf or one reticulation less than $\cR$. Each complete  cherry-reduction sequence for $\cR_1$ has length $k-1$. Hence, by the induction assumption, each cherry-picking sequence  associated with a complete cherry-reduction sequence for $\cR_1$ is tree-child. In particular,  the complete cherry-picking sequence $(r_2,r_3,\ldots,r_k)$ for $\cR_1$ is tree-child. Clearly, if $r_1=[x_1,y_1]$, then $\Sigma$ is also tree-child. We may therefore assume that $r_1=(x_1,y_1)$. Let $v$ be the parent of $x_1$ in $\cR_0$. As $r_1=(x_1,y_1)$, $v$ is a reticulation. Let $w$ be the parent of $v$ in $\cR_0$ that is not the parent of $y_1$. Since $\cR_0$ is tree-child, $w$ is the root or a tree vertex. Let $r_i$ with $i\in\{2,3,\ldots,k\}$ be the successor pair of $r_1$, that is $S(1)=r_i$. Observe that $w$ is a vertex of $\cR_{i-1}$ and not a vertex of $\cR_i$. 

We next show that $\Sigma$ satisfies (P1). Clearly, this is the case if $r_i$ is a cherry pair. We may therefore assume towards a contradiction that  $r_i=(x_i,y_i)$. Since $w$ is not a reticulation in $\cR_0$, we have $y_i=x_1$. Hence, the child of $w$ that is not $x_1$ is a reticulation in $\cR_{i-1}$. But then the two children of $w$ in $\cR_0$ are both reticulations, a contradiction to $\cR_0$ being tree-child. It follows that $\Sigma$ satisfies (P1) and, in particular, $r_i$ is a cherry pair. We complete the proof by showing that $\Sigma$ also satisfies (P2). Again assume the contrary. Then there exists an element $j\in \{2,3,\ldots,i-1\}$ such that $s(1)=s(j)=i$ and $r_j=(x_j,y_j)$. Moreover, $\{x_1,x_j\}=\{x_i,y_i\}$. Let $v'$ be the reticulation in $\cR_0$ such that $x_j$ is the child of $v'$ in $\cR_{j-1}$, and let $p$ and $p'$ be the two parents of $v'$ in $\cR_0$ such that $(p,v')$ is deleted in obtaining $\cR_{j}$ from $\cR_{j-1}$.  Assume that there exists a directed path from $p'$ to  $x_1$ in $\cR_{j-1}$. Since $S(1)=r_i$ and $1<j<i$, it follows that $w=p'$. Thus, the two children of $w$ in $\cR_0$ are both reticulations, a contradiction. On the other hand, assume that there exists no directed path from $p'$ to $x_1$ in $\cR_{j-1}$. In this case, the successor pair of $r_j$ does not contain $x_1$, thereby contradicting that $s(j)=i$. It now follows that $\Sigma$ is tree-child. This completes the proof of the lemma. \qed
\end{proof}

The next theorem  follows from Lemmas~\ref{l:one} and~\ref{l:two} and  characterises rooted tree-child networks.

\begin{theorem}\label{t:characterisation}
Let $\cR$ be a rooted phylogenetic network on $X$. Then $\cR$ is tree-child if and only if  there exists a complete tree-child cherry-picking sequence for $\cR$.
\end{theorem}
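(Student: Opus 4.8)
The plan is essentially to observe that Theorem~\ref{t:characterisation} is an immediate consequence of the two lemmas that precede it, so the only real content of the proof is to check that the two directions of the biconditional line up correctly with Lemmas~\ref{l:one} and~\ref{l:two}, and to handle a small bookkeeping point about the existence of a complete cherry-reduction sequence.

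For the "if" direction, suppose that $\cR$ admits a complete tree-child cherry-picking sequence $\Sigma$. By definition, $\Sigma$ is associated with some cherry-reduction sequence $\sigma$ for $\cR$, and since $\Sigma$ is complete we have $k=|X|+r(\cR)-1$, so the terminal network $\cR_k$ of $\sigma$ consists of a single vertex; hence $\sigma$ is complete. Now apply Lemma~\ref{l:one}: a tree-child cherry-picking sequence associated with a complete cherry-reduction sequence for $\cR$ forces $\cR$ to be tree-child. That is exactly the conclusion we want.

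For the "only if" direction, suppose $\cR$ is tree-child. Then $\cR$ is orchard (this is recalled in the discussion of orchard networks), so a complete cherry-reduction sequence $\sigma$ for $\cR$ exists, and with it an associated (in the rooted case, unique) cherry-picking sequence $\Sigma$, which is complete since $\sigma$ is. By Lemma~\ref{l:two}, every cherry-picking sequence associated with a complete cherry-reduction sequence for a tree-child network is tree-child, so in particular $\Sigma$ is a complete tree-child cherry-picking sequence for $\cR$. This establishes the converse, and the theorem follows.

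I do not anticipate a genuine obstacle here — all the work has been front-loaded into Lemmas~\ref{l:one} and~\ref{l:two}. The one point that deserves a sentence of care is making sure that "complete cherry-picking sequence" and "cherry-picking sequence associated with a complete cherry-reduction sequence" are used consistently: a complete cherry-picking sequence is by definition one of length $|X|+r(\cR)-1$, and one should note that such a sequence is necessarily associated with a complete cherry-reduction sequence (because reaching length $|X|+r(\cR)-1$ via cherry reductions leaves a single vertex), so that Lemmas~\ref{l:one} and~\ref{l:two} apply verbatim. With that remark in place the proof is just "combine the two lemmas."
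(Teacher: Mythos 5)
Your proposal is correct and matches the paper's approach exactly: the paper itself derives Theorem~\ref{t:characterisation} directly by combining Lemmas~\ref{l:one} and~\ref{l:two}, with the orchard property supplying existence of a complete cherry-reduction sequence in the tree-child direction. Your extra remark that a cherry-picking sequence of length $|X|+r(\cR)-1$ is necessarily associated with a complete cherry-reduction sequence is a sound and appropriate bit of bookkeeping consistent with the paper's definitions.
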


\noindent {\bf Algorithmic consequences.} Let $\cR$ be a rooted phylogenetic network on $X$. Checking if $\cR$ is tree-child takes polynomial-time because one can simply check if $\cR$ has a stack or a pair of sibling reticulations. Lemmas~\ref{l:one} and~\ref{l:two} offer an alternative way of checking quickly if $\cR$ is tree-child. Essentially, one only has to compute a maximal cherry-reduction sequence $\sigma$ for $\cR$. By Proposition~\ref{p:max}, every maximal cherry-reduction sequence of a rooted orchard network is complete. Thus, if $\sigma$ is not complete, then $\cR$ is not orchard and, therefore, not tree-child. If $\sigma$ is complete but the cherry-picking sequence associated with $\sigma$ is not tree-child, then it follows from the contrapositive of Lemma~\ref{l:two} that $\cR$ is not tree-child. On the other hand, if $\sigma$ is complete and the cherry-picking sequence associated with $\sigma$ is tree-child, then $\cR$ is tree-child by Lemma~\ref{l:one}.

\subsection{Unrooted tree-child networks}\label{subsec:unrooted}

In this section, we characterise unrooted tree-child networks in terms of cherry-picking sequences. 

\begin{theorem} \label{t:tc-orientable}
Let $\cU$ be an unrooted phylogenetic network on $X$. Then  $\cU$ is tree-child if and only if there exists a complete tree-child cherry-picking sequence  for $\cU$.
\end{theorem}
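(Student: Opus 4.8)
The plan is to transfer the rooted characterisation in Theorem~\ref{t:characterisation} to the unrooted setting through the operation that sends a rooted phylogenetic network $\cR$ to the unrooted network obtained by forgetting all arc directions and suppressing the root; call this the \emph{shadow} of $\cR$, so that $\cR$ is an orientation of $\cU$ precisely when $\cU$ is (isomorphic to) the shadow of $\cR$. The single technical ingredient I would isolate first is a compatibility lemma: if $\cR$ is an orientation of $\cU$ and $[a,b]$ (resp.\ $(a,b)$ with reticulation leaf $a$) is a cherry (resp.\ reticulated cherry) of $\cR$, then $[a,b]$ (resp.\ the unrooted reticulated cherry on $\{a,b\}$) is a cherry (resp.\ reticulated cherry) of $\cU$, and the network obtained from $\cR$ by the corresponding cherry reduction is an orientation of the network obtained from $\cU$ by the corresponding cherry reduction. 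Reading the same local picture backwards gives an ``undo'' statement: every cherry reduction applied to $\cU$ can be reversed inside any orientation. Concretely, to reverse the reduction of a cherry $[a,b]$ in an orientation $\cR$ one subdivides the arc entering $b$ with a new vertex $u$ and adds the arc $(u,a)$ (or, in the degenerate case where $\cR$ is the single vertex $b$, one adds a new root with children $a$ and $b$); to reverse the reduction of a reticulated cherry $(a,b)$ one performs exactly the construction used in the proof of Lemma~\ref{l:one}, namely subdividing the arc entering $a$ with a new vertex $v$, subdividing the arc entering $b$ with a new vertex $u$, and adding the arc $(u,v)$, which turns $v$ into a reticulation whose parents are $u$ and the former parent of $a$. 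In each case, reducing the same cherry or reticulated cherry in the modified network returns $\cR$, and taking shadows returns $\cU$ with the reduction reversed.

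For the forward direction, suppose $\cU$ is tree-child and fix a tree-child orientation $\cR$ of $\cU$. By Theorem~\ref{t:characterisation} there is a complete tree-child cherry-picking sequence $\Sigma=(r_1,\dots,r_k)$ for $\cR$, associated with a complete cherry-reduction sequence $\sigma=(\cR=\cR_0,\cR_1,\dots,\cR_k)$. Taking shadows and applying the compatibility lemma step by step, the shadows $\cU_i$ of the $\cR_i$ form a cherry-reduction sequence for $\cU$ for which $\Sigma$ is an associated cherry-picking sequence; here one uses that for a rooted reticulated cherry the reduction fixes which leaf is the reticulation leaf, so the otherwise ambiguous unrooted record at that step may be taken to be $r_i$. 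Since the shadow of a single vertex is a single vertex, this cherry-reduction sequence is complete, and whether $\Sigma$ is tree-child depends only on the sequence $(r_1,\dots,r_k)$ and not on whether the underlying networks are rooted. Hence $\Sigma$ is a complete tree-child cherry-picking sequence for $\cU$.

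For the reverse direction -- the part that genuinely uses the ``undo'' construction -- suppose $\Sigma=(r_1,\dots,r_k)$ is a complete tree-child cherry-picking sequence for $\cU$, associated with a complete cherry-reduction sequence $\sigma=(\cU=\cU_0,\cU_1,\dots,\cU_k)$. I would build rooted phylogenetic networks $\cR_k,\cR_{k-1},\dots,\cR_0$ by reverse induction on $i$: take $\cR_k$ to be the single vertex $\cU_k$, and, given an orientation $\cR_i$ of $\cU_i$, let $\cR_{i-1}$ be obtained from $\cR_i$ by reversing the reduction $r_i$ exactly as described above. By the compatibility lemma, each $\cR_{i-1}$ is a rooted phylogenetic network that is an orientation of $\cU_{i-1}$, and reducing $r_i$ in $\cR_{i-1}$ recovers $\cR_i$; moreover only the reticulated-cherry reversals add a reticulation, so $\cR_k$ being a single vertex forces the sequence to be complete. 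Consequently $(\cR_0,\cR_1,\dots,\cR_k)$ is a cherry-reduction sequence for $\cR_0$ whose unique associated cherry-picking sequence is $\Sigma$, and it is complete. Since $\Sigma$ is tree-child, Lemma~\ref{l:one} yields that $\cR_0$ is a rooted tree-child network, and as $\cR_0$ is an orientation of $\cU$ this says exactly that $\cU$ is tree-child.

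The main obstacle is the careful verification of the reversal step and of the compatibility lemma at the boundary: one must check that reversing a reticulated cherry really creates a reticulation with the intended pair of parents, that no parallel arcs, loops, or directed cycles are introduced and the out-degree of the root stays $2$, and that the degenerate configurations -- the parent of $a$ or of $b$ being the root, $\cR$ (or $\cU$) consisting of a single edge or a single vertex, and a cherry of $\cR$ whose parent is the root -- behave consistently under both reduction and the shadow. Each of these checks is purely local and parallels the construction already carried out in the proof of Lemma~\ref{l:one}, so once the compatibility lemma is established both implications are short. I would also note in passing that the reverse-direction construction never uses tree-childness before the final appeal to Lemma~\ref{l:one}, so it additionally shows that every unrooted orchard network admits an orientation.
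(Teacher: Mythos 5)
Your proposal is correct and follows essentially the same route as the paper: the forward direction transfers a complete tree-child cherry-picking sequence of a tree-child orientation (obtained via Lemma~\ref{l:two}/Theorem~\ref{t:characterisation} and Proposition~\ref{p:max}) down to $\cU$ step by step, and the reverse direction rebuilds an orientation by undoing each reduction with exactly the subdivision-and-add-arc constructions the paper uses, finishing with Lemma~\ref{l:one}. Packaging the inductive step as a ``compatibility lemma'' and running the reconstruction as a reverse induction is only a cosmetic reorganisation of the paper's induction on the length $k$.
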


\begin{proof}
First, suppose that $\cU$ is tree-child. Then there exists a rooted  tree-child network $\cR$ on $X$ that is an orientation of $\cU$.  Let $\sigma_\cR=(\cR=\cR_0,\cR_1,\cR_2,\ldots,\cR_k)$ be a maximal cherry-reduction sequence for $\cR$, and let $\Sigma=(r_1,r_2,\ldots,r_k)$ be the cherry-picking sequence associated with $\sigma_\cR$. It follows from  Proposition~\ref{p:max} and Lemma~\ref{l:two} , that $\Sigma$ is complete and tree-child. Recall that $k=|X|+r(\cR)-1$. We show by induction on $k$ that there exists a complete cherry-reduction sequence $\sigma_\cU=(\cU=\cU_0,\cU_1,\cU_2,\ldots,\cU_k)$ for $\cU$ such that $\Sigma$ is a cherry-picking sequence associated with $\sigma_\cU$. Since $\Sigma$ is tree-child, the result then follows. Assume that $k=1$. Then $r_1=[x_1,y_1]$ and $\cR$ is the rooted phylogenetic network whose root is adjacent to the two leaves $x_1$ and $y_1$. Since $\cR$ is an orientation of $\cU$, this implies that $\cU$ consists of the single edge $\{x_1,y_1\}$. Let $\cU_1$ be the unrooted phylogenetic network that can be obtained from $\cU$ by reducing $r_1$. Clearly $\sigma_\cU=(\cU=\cU_0,\cU_1)$ is a complete cherry-reduction sequence for $\cU$ whose associated cherry-picking sequence $\Sigma=(r_1)$ is tree-child. Now assume that $k>1$ and that the result holds for all unrooted tree-child networks for which a tree-child orientation exists whose complete cherry-reduction sequences have length less than $k$. Consider $r_1$. Since $\cR_0$ is a tree-child orientation of $\cU_0$, it follows that $\cU_0$ has either cherry $[x_1,y_1]$ if $r_1=[x_1,y_1]$ or reticulated cherry $(x_1,y_1)$ if $r_1=(x_1,y_1)$. Let $\cU_1$ be the unrooted phylogenetic network that can be obtained from $\cU_0$ by reducing $r_1$. As $\cR_0$ is a tree-child orientation of $\cU_0$ and $\cR_1$ is tree-child, it follows that $\cR_1$ is a tree-child orientation of $\cU_1$ and, in turn, $\cU_1$ is tree-child.  Furthermore, $\sigma_{\cR_1}=(\cR_1,\cR_2,\ldots,\cR_k)$ is a complete cherry-reduction sequence for $\cR_1$. Hence, by the induction assumption, there exists a complete cherry-reduction sequence $\sigma_{\cU_1}=(\cU_1,\cU_2,\ldots,\cU_k)$ such that $(r_2,r_3,\ldots,r_k)$ is a tree-child cherry-picking sequence associated with $\sigma_{\cU_1}$. By construction of $\cU_1$ from $\cU_0$, it follows in turn that $\Sigma$ is associated with $\sigma_\cU$. Thus, if $\cU$ is tree-child, then there exists a complete tree-child cherry-picking sequence for $\cU$.

Second, suppose that there exists complete tree-child cherry-picking sequence $\Sigma=(r_1,r_2,\ldots,r_k)$ for $\cU$. We show by induction on $k$ that there exists a rooted phylogenetic network $\cR$ and a complete cherry-reduction sequence $\sigma_\cR=(\cR=\cR_0,\cR_1,\cR_2,\ldots,\cR_k)$ for $\cR$ such that $\cR$ is an orientation of $\cU$ and $\Sigma$ is the cherry-picking sequence associated with $\sigma_\cR$. Since $\Sigma$ is tree-child, it then follows from Lemma~\ref{l:one} that $\cR$ is a tree-child orientation of $\cU$, thereby establishing the desired result. If $k=1$, then $r_1=[x_1,y_1]$. Let $\cR_0$ be the rooted phylogenetic network whose two leaves $x_1$ and $y_1$ are both children of the root, and let $\cR_1$ be the rooted phylogenetic network that can be obtained from $\cR_0$ by reducing $[x_1,y_1]$. Clearly $\cR_0$ is an orientation of $\cU_0$. Furthermore, $\sigma_\cR=(\cR=\cR_0,\cR_1)$ is a complete cherry-reduction sequence for $\cR$ whose associated cherry-picking sequence is $\Sigma=(r_1)$. Now assume that $k>1$ and that the result holds for all unrooted phylogenetic networks  that have a complete tree-child cherry-picking sequence of length less than $k$. Let $\cU_1$ be the unrooted phylogenetic network obtained from $\cU_0$ by reducing $r_1$. It follows that $\Sigma=(r_2,r_3,\ldots,r_k)$ is a complete tree-child cherry-picking sequence for $\cU_1$ of length $k-1$. By the induction assumption, there exists a rooted phylogenetic network $\cR_1$ and a complete cherry-reduction sequence $\sigma_{\cR_1}=(\cR_1,\cR_2,\ldots,\cR_{k})$ for $\cR_1$ such that $\cR_1$ is an orientation of $\cU_1$  and $(r_2,r_3,\ldots,r_{k})$ is the cherry-picking sequence associated with  $\sigma_{\cR_1}$. Now consider $r_1$. Assume that  $r_1=[x_1,y_1]$. Then obtain a rooted phylogenetic network $\cR_0$ from $\cR_1$ by subdividing the arc that is directed into $y_1$ with a new vertex $u$ and adding the arc $(u,x_1)$. Clearly, $[x_1,y_1]$ is a cherry of $\cR_0$. Since $\cU_1$ is obtained from $\cU_0$ by reducing $r_1$ and $\cR_1$ is an orientation of $\cU_1$, it follows that $\cR_0$ is an orientation of $\cU_0$. Moreover $(\cR=\cR_0,\cR_1,\cR_2,\ldots,\cR_{k})$ is a complete cherry-reduction sequence for $\cR_0$ whose associated cherry-picking sequence is $\Sigma$. We may now assume that  $r_1=(x_1,y_1)$.  Then obtain  a rooted phylogenetic network $\cR_0$ from $\cR_1$ by subdividing the arc that is directed into $x_1$ (resp. $y_1$) with a new vertex $v$ (resp. $u$) and adding the arc $(u,v)$. By construction $(x_1,y_1)$ is a reticulated cherry of $\cR_0$. As $(x_1,y_1)$ is also a reticulated cherry of $\cU_0$ and $\cR_1$ is an orientation of $\cU_1$ it again follows that $\cR_0$ is an orientation of $\cU_0$ and $(\cR=\cR_0,\cR_1,\cR_2,\ldots,\cR_{k})$ is a complete cherry-reduction sequence for $\cR_0$ whose associated cherry-picking sequence is $\Sigma$. Thus, if $\cU$ has a complete tree-child cherry-picking sequence, then $\cU$ is tree-child. This completes the proof of the theorem. \qed
\end{proof}

The next corollary is an immediate consequence of the first part of the proof of Theorem~\ref{t:tc-orientable}.

\begin{corollary}
Let $\cU$ be an unrooted phylogenetic network on $X$, and let $\cR$ be a tree-child orientation of $\cU$. Then $r(\cU)=r(\cR)$.
\end{corollary}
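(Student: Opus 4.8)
The plan is to derive this directly from the structure of the proof of Theorem~\ref{t:tc-orientable}, rather than re-proving anything from scratch. Recall that the corollary asserts that a tree-child orientation $\cR$ of $\cU$ has exactly the same reticulation number as $\cU$ itself. The first part of the proof of Theorem~\ref{t:tc-orientable} shows that, starting from a tree-child orientation $\cR$ of $\cU$, one obtains a maximal (hence, by Proposition~\ref{p:max}, complete) cherry-reduction sequence $\sigma_\cR$ for $\cR$ with associated cherry-picking sequence $\Sigma$, and then exhibits a complete cherry-reduction sequence $\sigma_\cU$ for $\cU$ with $\Sigma$ among its associated cherry-picking sequences. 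The key observation is that $\Sigma$ has the same length $k$ in both incarnations, because it is literally the same sequence of reductions.

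First I would invoke the definition of a complete cherry-reduction sequence: if $\sigma_\cR$ is a complete cherry-reduction sequence for the rooted network $\cR$ on $X$, then by definition its length satisfies $k=|X|+r(\cR)-1$. Similarly, if $\sigma_\cU$ is a complete cherry-reduction sequence for the unrooted network $\cU$ on $X$, then its length satisfies $k=|X|+r(\cU)-1$. The first part of the proof of Theorem~\ref{t:tc-orientable} produces exactly such a pair $\sigma_\cR$, $\sigma_\cU$ sharing the same associated cherry-picking sequence $\Sigma=(r_1,r_2,\ldots,r_k)$, so in particular both sequences have the same length $k$. Equating the two expressions for $k$ gives $|X|+r(\cR)-1=|X|+r(\cU)-1$, and cancelling $|X|-1$ yields $r(\cU)=r(\cR)$.

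The only thing that needs a sentence of justification is that the $k$ appearing in $\sigma_\cR$ and the $k$ appearing in $\sigma_\cU$ are genuinely the same integer. This is immediate from the construction in Theorem~\ref{t:tc-orientable}: the sequence $\sigma_\cU=(\cU=\cU_0,\cU_1,\ldots,\cU_k)$ is built so that $\cU_i$ is obtained from $\cU_{i-1}$ by reducing $r_i$ for each $i\in\{1,2,\ldots,k\}$, with $\Sigma=(r_1,\ldots,r_k)$ the very cherry-picking sequence coming from $\sigma_\cR$; thus both cherry-reduction sequences have length $k$, the common length of $\Sigma$. There is no real obstacle here — the corollary is essentially a bookkeeping consequence, and the one mild subtlety is simply making explicit that the reticulation number of an unrooted network is defined via $|E|-(|V|-1)$ (equivalently, via the length of a complete cherry-reduction sequence) so that the identity $k=|X|+r(\cU)-1$ is available on the unrooted side; this identity is already recorded in the paragraph on cherry-reduction sequences in Section~\ref{sec:prelim}.
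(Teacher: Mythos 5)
Your argument is correct and matches the paper's intent exactly: the paper states the corollary as an immediate consequence of the first part of the proof of Theorem~\ref{t:tc-orientable}, and your write-up simply makes the bookkeeping explicit, namely that the constructed complete cherry-reduction sequences $\sigma_\cR$ and $\sigma_\cU$ share the same length $k$, so $|X|+r(\cR)-1=k=|X|+r(\cU)-1$. Nothing further is needed.
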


\noindent {\bf Algorithmic consequences.} As already mentioned in Section~\ref{sec:prelim}, not every maximal cherry-reduction sequence of an unrooted 
orchard network is complete. This somewhat negative result also extends to maximal cherry-reduction sequences of unrooted 
tree-child networks. More precisely, not every maximal cherry-reduction sequence of an unrooted tree-child network that is associated with a tree-child cherry-picking sequence is complete. For example, Figure~\ref{fig:tree-child-order}(ii) shows an unrooted tree-child network with reticulated cherry $(h,i)$ such that reducing $(h,i)$ results in an unrooted phylogenetic network that does not have any cherry or reticulated cherry. In addition and in contrast to Lemma~\ref{l:two}, Figure~\ref{fig:tree-child-order}(iii) shows an example of an unrooted tree-child network for which not every complete cherry-picking sequence is tree-child. Hence, Theorem~\ref{t:tc-orientable} does not (immediately) pave the way for a polynomial-time algorithm to decide whether or not an unrooted phylogenetic network $\cU$ is tree-child. This hurdle is a result of needing to check for an exponential number of cherry-picking sequences for $\cU$ whether or not they are complete and tree-child. Nevertheless, the characterisation as stated in Theorem~\ref{t:tc-orientable} offers a new approach to settle the computational complexity of  {\sc Tree-Child-Orientation}. In particular, instead of deciding if $\cU$ has a tree-child orientation, it is alternatively possible to check if $\cU$ has a  tree-child cherry-picking sequence associated with a complete cherry-reduction sequence for $\cU$.

\section{Concluding remarks}\label{sec:conclu}

In this section, we briefly discuss the implications of our work on the class of rooted stack-free networks and the differences between tree-child cherry-picking sequences as introduced in this paper and a related definition. We start by defining stack-free networks. A {\it rooted stack-free network} is a rooted phylogenetic network without any stack. The class of rooted stack-free networks intersects with the class of rooted orchard networks but is not contained in it. Thus, there exist rooted stack-free networks that are also orchard whereas others are stack-free but not orchard. Given the proof of Lemma~\ref{l:one}, it might be tempting to conjecture that rooted stack-free orchard networks can be characterised using  cherry-picking sequences that are only required to satisfy (P1) in the definition of a tree-child cherry-picking sequence. Unfortunately, this is not the case. Figure~\ref{fig:stack-free}(i) shows a rooted stack-free orchard network that is not tree-child and for which each complete cherry-picking sequence \ starts with $((b,a),(c,b))$ and does consequently not satisfy (P1). Thus (P1) and (P2) cannot be separated out in the sense that one avoids stacks and the other avoids sibling reticulations. 

In the context of simultaneously embedding a collection $\cP$ of rooted phylogenetic  $X$-trees in a rooted tree-child network whose number of reticulations is minimised, the authors of~\cite{linz19} defined a cherry-picking sequence for $\cP$ to be a sequence $([x_1,y_1],[x_2,y_2],\ldots,[x_k,y_k])$ of pairs  in $X$ that satisfies the following property.

\begin{enumerate}[(P)]
\item For all $i\in\{1,2,\ldots,k\}$, we have $x_i\notin\{y_{i+1}, y_{i+2},\ldots,y_k\}$. 
\end{enumerate}

\noindent Given two rooted tree-child networks $\cR$ and $\cR'$, we say that $\cR'$ is {\it contained} in $\cR$ if $\cR'$ can be obtained from $\cR$ by deleting reticulation arcs and suppressing resulting degree-2 vertices. Following the publication of~\cite{linz19}, Janssen and Murakami~\cite{janssen21}, subsequently showed that complete cherry-reduction sequences for $\cR$ whose associated cherry-picking sequences satisfy (P) are essentially sufficient to decide if $\cR'$ is contained in $\cR$. As such, it is interesting to ask if (P1) and (P2) in our definition of a tree-child cherry-picking sequence $(r_1,r_2,\ldots,r_k)$ can  be replaced with the following property while still obtaining a characterisation for tree-child networks.

\begin{enumerate}[(P3)]
\item For each element $r_i$ with $i\in \{1,2,\ldots,k\}$, its first coordinate $x_i$ is not equal to the second coordinate $y_j$ of any $r_j$ with $j\in\{i+1,i+2,\ldots,k\}$.
\end{enumerate}

\begin{figure}[t]
    \centering
\scalebox{0.95}{\input{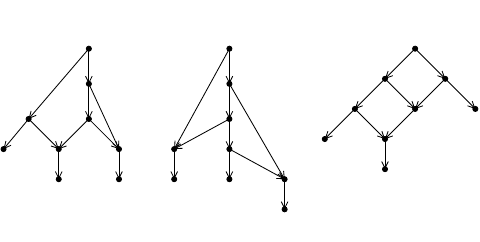_t}}
    \caption{(i) A rooted stack-free orchard network for which no complete cherry-picking sequence satisfies (P1). (ii) A rooted tree-child network that has four complete cherry-picking sequences  and only one satisfies (P3). (iii) A rooted phylogenetic network that is not tree-child, but has a complete cherry-picking sequence  that satisfies (P3). See text for details.}
    \label{fig:stack-free}
\end{figure}

\noindent The answer however is that this is not possible. Figure~\ref{fig:stack-free}(ii) shows an example of a rooted tree-child network that has four complete cherry-picking sequences\\
 $$\Sigma_1=((c,b),(a,b),[b,c],[c,a]), \hspace{1cm} \Sigma_2=((c,b),(a,b),[b,c],[a,c]),$$
$$\Sigma_2=((c,b),(a,b),[c,b],[b,a]), \hspace{1cm}\Sigma_4=((c,b),(a,b),[c,b],[a,b]),$$
and only $\Sigma_4$ satisfies (P3). Moreover, Figure ~\ref{fig:stack-free}(iii) shows an example of a rooted phylogenetic network that is not tree-child but has a complete  cherry-picking sequence  $$((b,a),(b,a), [b,c],[a,c])$$  that satisfies (P3).\\

\noindent{\bf Acknowledgements.} We thank the New Zealand Marsden Fund for their financial support and Yukihiro Murakami for insightful discussions.

\end{document}